\theoremstyle{plain}
\newtheorem{theorem}{Theorem}[section]
\newtheorem{corol}[theorem]{Corollary}
\newtheorem{conjecture}[theorem]{Conjecture}
\theoremstyle{definition}
\newtheorem{definition}[theorem]{Definition}
\newtheorem{remark}[theorem]{Remark}
\DeclareMathOperator{\PR}{P\mathbb R}
\DeclareMathOperator{\Ind}{Ind}
\def\sgn{\mathop{\rm sign}\nolimits}
\newcommand{\IndI}{\Ind_\infty}
\newcommand{\IndC}{\Ind_{-\infty}^{+\infty}}
\newcommand{\IndPR}{\Ind_{\PR}}
\def\sgn{\mathop{\rm sign}\nolimits}
\def\eqbd{\mathop{{:}{=}}}
\begin{document}
\vspace{\baselineskip}

\vspace{\baselineskip} \thispagestyle{empty}

\title{Hawaii conjecture through the lens of Cauchy indices}

\author{
Dmitry Gnatyuk and %\thanks{Research was partially supported by
%National Science Foundation of China, grant no. BC0710062.}
%\\
%\small Department of Science and Mathematics\\
%\small Wheelock College, USA\\
%\small 800 Dong Chuan Road, 200240,\\
%\small Shanghai, P.R. China\\
%\small Email: {\tt olga.m.katkova@gmail.com}
% \and
Mikhail Tyaglov%\thanks{The work of M.\,Tyaglov was supported with the support of the Russian Science Foundation Grant 19-71-30002}
%National Natural Science Foundation
%of China under grant no.~11871336.}
\\
%\small School of Mathematical Sciences and MOE-LSC\\
%\small Shanghai Jiao Tong University\\
%\small 800 Dong Chuan Road, 200240,\\
%\small Shanghai, 200240, P.R. China\\
%\small and \\
\small Department of Mathematics and Computer Sciences\\
\small Saint Petersburg State University, Saint Petersburg, 199178, Russia\\
\small Email: {\tt tyaglov@mail.ru}
%\and
%Anna Vishnyakova
%\\
%\small School of Mathematics and Computer Sciences\\
%\small Kharkov National V.N.Karazin University\\
%\small Kharkov, Ukraine\\
%\small Email: {\tt anna.m.vishnyakova@univer.kharkov.ua}
}

\date{\small \today}

\maketitle

\vspace{5mm}

\textbf{Key words.} Hawaii conjecture, Cauchy indices, root location.

\vspace{2mm}

\textbf{AMS subject classification.}  26C10, 30C15, 30C10

\vspace{4mm}

\begin{flushright}
\textit{Dedicated to Alexander Aptekarev on the\\ occasion of his 70$^{th}$ anniversary.}
\end{flushright}

\vspace{2mm}

\begin{abstract}
Given a real polynomial $p$, we study some properties of real critical points of its logarithmic derivative
$Q[p]=(p'/p)'$ using the theory of Cauchy indices. As a by-product we improve the lower bound for the number
these points.
\end{abstract}

%\keywords{Hermite-Biehler theorem, root localization}
%\subjclass[2000]{Primary 26C05, Secondary 30C15}

%%%%%%%%%%%%%%%%%%%%%%%%%%%%%%%%%%%%%%%
\section{Introduction}
%%%%%%%%%%%%%%%%%%%%%%%%%%%%%%%%%%%%%%%

In the remarkable work~\cite{CravenCsordasSmith} T.\,Craven, G.\,Csordas, and W.\,Smith posed a conjecture stated that for a real polynomial $p$ the number of the
real critical point of its logarithmic derivative
\begin{equation}\label{main.function.2}
Q[p](z)\stackrel{def}{=}\dfrac{d}{dz}\left(\dfrac{p'(z)}{p(z)}\right)=
\dfrac{p(z)p''(z)-\left(p'(z)\right)^2}{\left(p(z)\right)^2}.
\end{equation}
does not exceed the number of non-real zeroes of $p$:
\begin{equation}\label{Hawaii.conjecture}
Z_{\mathbb{R}}\left(Q[p]\right)\leqslant2m,
\end{equation}
where $2m=Z_{\mathbb{C}}(p)$, and $Z_{\mathbb{R}}(f)$ and $Z_{\mathbb{C}}(f)$ denote the number of real and non-real zeroes of a function $f$, respectively.

In~\cite{Tyaglov_Hawaii} the Hawaii conjecture was proved  in a rather sophisticated way for polynomials and entire functions of class $\mathcal{L-P}^*$
(see, e.g.,~\cite[Definition~1.4]{Tyaglov_Hawaii} and references there). The method used in~\cite{Tyaglov_Hawaii} seemed not to be simplified for polynomials
only. However, it remains interesting to find a more simple proof based on polynomial properties, since once the Hawaii conjecture is true for polynomials,
it is true for some classes of entire functions as well~\cite{Csordas}.
We remind the reader that the proof of the Hawaii conjecture was based on the following inequalities~\cite[Theorem~3.16]{Tyaglov_Hawaii}
\begin{equation}\label{Number.real.zeroes.Q.ineq.Q1}
2m-2m_{1}\leqslant Z_{\mathbb{R}}\left(Q[p]\right)\leqslant2m-2m_{1}+Z_{\mathbb{R}}\left(Q[p']\right)
\end{equation}
hold for polynomials and entire functions of class $\mathcal{L-P}^*$ possessing the so-called property~A (see~\cite[Definition~1.26]{Tyaglov_Hawaii}).
The property is rather technical, and functions with such a property seem to be not of interest themselves, out of that proof. However, any real
polynomial or entire function~$f$ can be replaced by a new function $g$ such that $Q[f]=Q[g]$ and $g$ possesses property~A. Precisely,
$g(z)=e^{\sigma z}f(z)$, where an appropriate $\sigma\in\mathbb{R}$ always exists,~\cite[Theorem~1.28]{Tyaglov_Hawaii}. Basing on this fact,
one can reformulate~\cite[Theorem~3.16]{Tyaglov_Hawaii} without any additional properties for arbitrary polynomials and entire functions
of class~$\mathcal{L-P}^*$ (see Theorem~\ref{Theorem.Hawaii.base.ineq} below).

In what follows, we restrict ourselves by considering only real polynomial. However, some facts can be extended to the class~$\mathcal{L-P}^*$
of entire functions.

\begin{definition}
Given a polynomial $p$ and a real number $\sigma$, we call the polynomial $p_{\sigma}$ defined as
\begin{equation}\label{p.sigma}
p_{\sigma}(z)\stackrel{def}{=}p'(z)+\sigma p(z),
\end{equation}
the generalised or Laguerre derivative of  $p$. It is clear that $p_{0}$
coincides with the ordinary derivative $p'$.
\end{definition}

For the Laguerre derivative, the following inequality holds (see, e.g., \cite[Problem 192, p.~69]{Polya&Szego}).
\begin{equation}\label{Propos.sigma.p.zeroes}
Z_{\mathbb{C}}(p_{\sigma})\leqslant Z_{\mathbb{C}}(p), \qquad\forall \sigma\in\mathbb{R}
\end{equation}
%\
where $Z_{\mathbb{C}}(f)$ denotes the number of non-real zeroes of the function $f$.

Thus, since $g'(z)=e^{\sigma z}f_{\sigma}(z)$, Theorem~3.16 from~\cite{Tyaglov_Hawaii} can be reformulated for polynomials as follows.

\begin{theorem}\label{Theorem.Hawaii.base.ineq}
Let $p$ be a real polynomial with $Z_{\mathbb{C}}(p)=2m>0$. Then there exists $\sigma\in\mathbb{R}$ such that
\begin{equation*}\label{Number.real.zeroes.Q.ineq.Q_sigma}
2m-2m_{\sigma}\leqslant Z_{\mathbb{R}}\left(Q[p]\right)\leqslant2m-2m_{\sigma}+Z_{\mathbb{R}}\left(Q[p_{\sigma}]\right),
\end{equation*}
where  $Q[p_{\sigma}]=\left({p'_{\sigma}}/{p_{\sigma}}\right)'$, and $2m_{\sigma}=Z_{\mathbb{C}}\left(p_{\sigma}\right)<2m$.
%the polynomial $p_{\sigma}$ is defined in~\eqref{p.sigma}.
\end{theorem}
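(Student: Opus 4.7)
The plan is to deduce the statement from \cite[Theorem~3.16]{Tyaglov_Hawaii} by the exponential change of variable indicated in the text preceding the theorem. Introduce $g(z):=e^{\sigma z}p(z)$ with $\sigma\in\mathbb{R}$ to be chosen. The operator $Q$ is invariant under multiplication by an exponential: since
$$\frac{g'(z)}{g(z)}=\sigma+\frac{p'(z)}{p(z)},$$
differentiating kills the constant $\sigma$ and yields $Q[g]=Q[p]$. Next, the computation $g'(z)=e^{\sigma z}\bigl(p'(z)+\sigma p(z)\bigr)=e^{\sigma z}p_{\sigma}(z)$, combined with the same observation applied to $g'$ in place of $g$, gives $Q[g']=Q[p_{\sigma}]$. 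Finally, $e^{\sigma z}$ is zero-free on $\mathbb{C}$, so $Z_{\mathbb{C}}(g)=Z_{\mathbb{C}}(p)=2m$ and $Z_{\mathbb{C}}(g')=Z_{\mathbb{C}}(p_{\sigma})=2m_{\sigma}$.

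I then invoke \cite[Theorem~1.28]{Tyaglov_Hawaii} to pick $\sigma\in\mathbb{R}$ such that $g\in\mathcal{L-P}^{*}$ possesses property~A. For such a $g$, the original \cite[Theorem~3.16]{Tyaglov_Hawaii}, i.e.\ inequality \reff{Number.real.zeroes.Q.ineq.Q1} applied to $g$, reads
$$2Z_{\mathbb{C}}(g)-2Z_{\mathbb{C}}(g')\leqslant Z_{\mathbb{R}}(Q[g])\leqslant 2Z_{\mathbb{C}}(g)-2Z_{\mathbb{C}}(g')+Z_{\mathbb{R}}(Q[g']),$$
and substituting the four identities above produces exactly the desired chain for~$p$.

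It remains to secure the strict drop $2m_{\sigma}<2m$. Here one combines the general bound \reff{Propos.sigma.p.zeroes} (which alone gives only $\leqslant$) with the extra information encoded in the selection of $\sigma$ in \cite[Theorem~1.28]{Tyaglov_Hawaii}: the shift produced there forces at least one conjugate pair of non-real zeros of $p$ to become a pair of real zeros of $p_{\sigma}$, a phenomenon which is available precisely because $2m>0$. The rest of the argument is pure bookkeeping; I expect the main obstacle, should one attempt a proof independent of \cite{Tyaglov_Hawaii}, to lie precisely in this last assertion, namely that an appropriate exponential shift strictly decreases the non-real zero count whenever $p$ has any non-real zeros at all.
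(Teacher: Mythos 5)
Your proposal is correct and is essentially the paper's own argument: the paper presents this theorem as a direct reformulation of \cite[Theorem~3.16]{Tyaglov_Hawaii} via the substitution $g(z)=e^{\sigma z}p(z)$, using $Q[g]=Q[p]$, $g'(z)=e^{\sigma z}p_{\sigma}(z)$ (hence $Q[g']=Q[p_{\sigma}]$), and \cite[Theorem~1.28]{Tyaglov_Hawaii} to choose $\sigma$ so that $g$ has property~A. Your closing remark correctly locates the only real content, namely that the strict inequality $2m_{\sigma}<2m$ is exactly what property~A guarantees in \cite[Theorem~3.16]{Tyaglov_Hawaii}, and is not something one gets from \reff{Propos.sigma.p.zeroes} alone.
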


This theorem, in fact, allows us to apply the method of Rolle's theorem from~\cite{Tyaglov_Hawaii} for polynomials only.
However, this does not simplify the proof much, because the proof from~\cite{Tyaglov_Hawaii} for polynomials is the longest
one. The shortest case is the entire functions without maximal and minimal real zeroes, e.g. sine multiplied by a polynomials.

In this work, we present another approach to study the properties and the number of real zeroes of the function $Q[p]$,
the so-called Cauchy indices introduced by A.\,Cauchy in 1837. In Section~\ref{section:Zeroes.of.Q.and.Q_sigma}, we find the
connection between zeroes $Q[p]$ and $Q[p_{\sigma}]$. Section~\ref{section:Zeroes.of.Q.and.p_sigma} is devoted to
a relation between zeroes of $Q[p]$ and $p_{\sigma}$. Here we obtain that, given a real polynomial $p$, there exists only
finitely many complex numbers which are multiple zeroes of the polynomials of the family $p_{\sigma}$ (Corollary~\ref{Corol.finite.number.multiple roots}).
Finally, in Section~\ref{section:Cauchy.indices} using the Cauchy indices, we establish a generalisation (one direction)
of~\cite[Theorem~1 and Corollary~1]{CravenCsordasSmith}. As a by-product we improve the lower bound for the number
$Z_{\mathbb{R}}\left(Q[p]\right)$, cf.~Theorem~\ref{Theorem.Hawaii.base.ineq}.

\setcounter{equation}{0}

%%%%%%%%%%%%%%%%%%%%%%%%%%%%%%%%%%%%%%%%%%%%%%%%%%%%%%%%%%%%%%%%%%%%%%%%%%%%%%%%%%%%%%%%%%%%%%%%%%%%%%%%%%%%%%%%%%%%
\section{Critical points of logarithmic derivatives of $p$ and $p_{\sigma}$}\label{section:Zeroes.of.Q.and.Q_sigma}
%%%%%%%%%%%%%%%%%%%%%%%%%%%%%%%%%%%%%%%%%%%%%%%%%%%%%%%%%%%%%%%%%%%%%%%%%%%%%%%%%%%%%%%%%%%%%%%%%%%%%%%%%%%%%%%%%%%%

We remind that the critical points of the logarithmic derivatives of a given real polynomial $p$ are the zeroes of
the function
$$
Q[p](z)=\left(\dfrac{p'(z)}{p(z)}\right)'=\dfrac{p(z)p''(z)-[p'(z)]^2}{p^2(z)}.
$$

Together with the function $Q[p]$, we will study the following polynomial
\begin{equation}\label{F.functions}
F[p](z)=p(z)p''(z)-[p'(z)]^2,
\end{equation}
which is the numerator of the rational function $Q[p]$.

In this section, we establish certain formul\ae\ providing relations between zeroes of the functions
$Q[p]$ and $Q[p_{\sigma}]$. We start with the following property of zeroes of the polynomial~
$F[p]$ (see Lemma~2.3 in~\cite{Tyaglov_Hawaii}).

%%%%%%%%%%%%%%%%%%%%%%%%%%%%%%%%%%%%%%%%%%%%%%%%%%%%%%%%%%%%%%%%%%%%%%%%%%%%%%%%%%%%%%%%%%%%%%%%%%%%%%%%%%%%%%%%%%%%%%%%%%%%%%%%%%%%%%%%
\begin{theorem}\label{Theorem.multiple.zero.Q.and.Q1}
Let $\lambda\in\mathbb{C}$ be a zero of $Q[p_{\sigma}]$ of multiplicity $r\geqslant1$ such that
$p(\lambda)\neq0$, $p'(\lambda)\neq0$, $p_{\sigma}(\lambda)\neq0$. If~$Q[p](\lambda)=0$, then
$\lambda$ is a zero of $Q[p]$ of multiplicity $r+1$.
\end{theorem}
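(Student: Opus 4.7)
The plan is to derive an explicit algebraic identity relating $Q[p_{\sigma}]$ to $Q[p]$ and its derivative, and then compare orders of vanishing at $\lambda$. The whole argument is driven by one elementary observation: since $p_{\sigma}=p'+\sigma p$, we have
$$
W(z)\eqbd\dfrac{p_{\sigma}(z)}{p(z)}=\dfrac{p'(z)}{p(z)}+\sigma,
$$
so $W'(z)=(p'/p)'=Q[p](z)$. This will let me trade derivatives of $W$ for copies of $Q[p]$.

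First I would write $p_{\sigma}=pW$ and compute $p_{\sigma}'=p'W+pW'=p\bigl(U W+Q[p]\bigr)$, where $U=p'/p$. Dividing by $p_{\sigma}=pW$ yields
$$
\dfrac{p_{\sigma}'}{p_{\sigma}}=U+\dfrac{Q[p]}{W}.
$$
Differentiating and using $W'=Q[p]$, I obtain the key identity
\begin{equation*}
Q[p_{\sigma}]\;=\;Q[p]\;+\;\dfrac{Q[p]'}{W}\;-\;\dfrac{Q[p]^{2}}{W^{2}}.
\end{equation*}
The hypotheses $p(\lambda)\neq0$ and $p_{\sigma}(\lambda)\neq0$ guarantee that $W$ is holomorphic and nonvanishing in a neighbourhood of $\lambda$, so both $1/W$ and $1/W^{2}$ are analytic and nonzero at $\lambda$.

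The remaining step is a local order count at $\lambda$. Suppose $Q[p]$ vanishes at $\lambda$ with exact multiplicity $s\geqslant1$; then $Q[p]'$ vanishes to order exactly $s-1$, while $Q[p]^{2}$ vanishes to order $2s$. Plugging into the identity, the three summands on the right have orders $s$, $s-1$, and $2s$ respectively; since $s\geqslant1$ we have $s-1<s\leqslant 2s$, so the middle term dominates and
$$
Q[p_{\sigma}](z)=\dfrac{Q[p]'(z)}{W(z)}+O\bigl((z-\lambda)^{s}\bigr),
$$
with leading coefficient $cs/W(\lambda)\neq0$, where $c$ is the leading Taylor coefficient of $Q[p]$ at $\lambda$. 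Hence $Q[p_{\sigma}]$ vanishes at $\lambda$ to order exactly $s-1$. Equating $s-1=r$ gives $s=r+1$, which is the claim.

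I do not expect any genuine obstacle: the hypothesis $p'(\lambda)\neq0$ never gets used directly once $p_{\sigma}(\lambda)\neq0$ is in hand, and every other step is a Taylor comparison. The only point demanding a little care is verifying that the leading coefficient $cs/W(\lambda)$ is indeed nonzero (so that no accidental cancellation drops the order below $s-1$), but $c\neq0$ by the definition of $s$, $s\geqslant1$, and $W(\lambda)\neq0$ by hypothesis, so this is immediate.
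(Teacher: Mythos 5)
Your proof is correct, and it takes a genuinely different route from the paper's. The paper stays at the level of the polynomial $F[p]=pp''-(p')^2$: it establishes the identities $p'F[p']=p''F'[p]-p'''F[p]$ and $p'F[p_{\sigma}]=p'_{\sigma}F'[p]+(\sigma p'_{\sigma}-p''_{\sigma})F[p]$, checks that $p'_{\sigma}(\lambda)\neq0$ (this is where the hypothesis $p'(\lambda)\neq0$ enters), and then runs an induction to obtain $p'(\lambda)F^{(j)}[p_{\sigma}](\lambda)=p'_{\sigma}(\lambda)F^{(j+1)}[p](\lambda)$ for $j=0,1,\ldots,r$, from which the shift of multiplicity by one is read off. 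You instead work with the rational functions themselves: setting $W=p_{\sigma}/p$, so that $W'=Q[p]$ and $p'_{\sigma}/p_{\sigma}=p'/p+Q[p]/W$, you arrive at the closed-form identity $Q[p_{\sigma}]=Q[p]+Q[p]'/W-Q[p]^{2}/W^{2}$ (I verified it: the first displayed relation reduces to $(p''+\sigma p')/(p'+\sigma p)$, as it must), and the conclusion becomes a one-line comparison of orders of vanishing at $\lambda$: the term $Q[p]'/W$, of exact order $s-1$, strictly dominates the terms of orders $s$ and $2s$, so $Q[p_{\sigma}]$ vanishes to order exactly $s-1$ and $s=r+1$. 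This buys you two things: the induction disappears, and, as you observe, the hypothesis $p'(\lambda)\neq0$ is never used, so your argument proves a marginally stronger statement (only $p(\lambda)\neq0$ and $p_{\sigma}(\lambda)\neq0$ are needed to make $W$ analytic and nonvanishing at $\lambda$). The one implicit point worth a sentence in a final write-up is that $Q[p]\not\equiv0$, so that the exact order $s$ is finite; this is automatic here, either because the numerator $F[p]$ is a nonzero polynomial whenever $p$ is nonconstant, or because $Q[p]\equiv0$ would force $Q[p_{\sigma}]\equiv0$ by your identity, contradicting the assumption that $\lambda$ is a zero of $Q[p_{\sigma}]$ of finite multiplicity $r$.
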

%%%%%%%%%%%%%%%%%%%%%%%%%%%%%%%%%%%%%%%%%%%%%%%%%%%%%%%%%%%%%%%%%%%%%%%%%%%%%%%%%%%%%%%%%%%%%%%%%%%%%%%%%%%%%%%%%%%%%%%%%%%%%%%%%%%%%%%%
\begin{proof} Since $p(\lambda)\neq0$, $p_{\sigma}(\lambda)\neq0$, we can deal with $F[p]$ and $F[p_{\sigma}]$
without loss of generality.

First we find a relationship between $F[p]$, $F[p']$ and $F[p_{\sigma}]$. From~\eqref{F.functions} we have
\begin{equation}\label{Theorem.multiple.zero.Q.and.Q1.proof.1}
\begin{array}{c}
p'(z)F[p'](z)=[p'(z)]^2p'''(z)-p'(z)[p''(z)]^2=\\
\\
=p(z)p''(z)p'''(z)-p'(z)[p''(z)]^2-p'''(z)F[p](z)%=\\
%\\
=p''(z)F'[p](z)-p'''(z)F[p](z).
\end{array}
\end{equation}
Now for $F[p_{\sigma}]$ one has
\begin{equation*}
F[p_{\sigma}](z)=p_{\sigma}(z)p_{\sigma}''(z)-[p_{\sigma}'(z)]^2=F[p'](z)+\sigma F'[p](z)+\sigma^2 F[p](z).
\end{equation*}
From this formula and~\eqref{Theorem.multiple.zero.Q.and.Q1.proof.1}, we obtain
\begin{equation}\label{derivatives}
p'(z)F[p_{\sigma}](z)=p'_{\sigma}(z)F'[p](z)+(\sigma p'_\sigma(z)-p''_{\sigma}(z))F[p](z).
\end{equation}

Let $\lambda$ be a zero of $F[p_{\sigma}]$ of multiplicity $r$ and
$F[p](\lambda)=0$. Then $p''(\lambda)\neq0$, since $p'(\lambda)$ would vanish otherwise.
Moreover, $p'_{\sigma}(\lambda)\neq0$ as well. Indeed, from $F[p](\lambda)=0$ and
$p(\lambda)\neq0$, $p'(\lambda)\neq0$, it follows that
\begin{equation}\label{Theorem.multiple.zero.Q.and.Q1.proof.3}
\dfrac{p'(\lambda)}{p(\lambda)}=\dfrac{p''(\lambda)}{p'(\lambda)}.
\end{equation}
If $p'_{\sigma}(\lambda)=0$, then from~\eqref{Theorem.multiple.zero.Q.and.Q1.proof.3} we get
$$
-\sigma=\dfrac{p'(\lambda)}{p(\lambda)}=\dfrac{p''(\lambda)}{p'(\lambda)}
$$
that implies $p_{\sigma}(\lambda)=0$, a contradiction.

Now from formul\ae~\eqref{Theorem.multiple.zero.Q.and.Q1.proof.1}--\eqref{derivatives}
it follows by induction that
\begin{equation}\label{Theorem.multiple.zero.Q.and.Q1.proof.4}
p'(\lambda)F^{(j)}[p_{\sigma}](\lambda)=p'_{\sigma}(\lambda)F^{(j+1)}[p](\lambda),\qquad j=0,1,\ldots,r.
\end{equation}
Since $\lambda$ is a zero of $F[p_{\sigma}](\lambda)$ of multiplicity $r$, one has
from~\eqref{Theorem.multiple.zero.Q.and.Q1.proof.4} that $F^{(k)}[p](\lambda)=0$, $k=0,1,\ldots,r$, and
$F^{(r+1)}[p](\lambda)\neq0$. Hence, $\lambda$ is a zero of $F[p]$ of multiplicity exactly $r+1$. But
$p(\lambda)\neq0$ by assumption, therefore,~$\lambda$~is a~zero of $Q[p]$ of multiplicity $r+1$.
\end{proof}

As a consequence of~\eqref{Theorem.multiple.zero.Q.and.Q1.proof.1}--\eqref{derivatives} we immediately obtain the following.

\begin{corol}\label{Corol.multiple.zero.Q.and.Q.sigma.converse}
Let $\lambda\in\mathbb{C}$ be a zero of $Q[p]$ of multiplicity $r\geqslant2$ and $p(\lambda)\neq0$, $p'(\lambda)\neq0$. Then $\lambda$
is a zero of $Q[p_{\sigma}]$ of multiplicity $r-1$ for any $\sigma\in\mathbb{R}$ such that $p_{\sigma}(\lambda)\neq0$.
\end{corol}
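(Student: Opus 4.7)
The plan is to run the identity
\[
p'(z)F[p_{\sigma}](z)=p'_{\sigma}(z)F'[p](z)+\bigl(\sigma p'_\sigma(z)-p''_{\sigma}(z)\bigr)F[p](z),
\]
established as~\eqref{derivatives} in the proof of Theorem~\ref{Theorem.multiple.zero.Q.and.Q1}, in the reverse direction: instead of starting from a known zero of $F[p_\sigma]$ and extracting a zero of $F[p]$, I would start from the prescribed order of vanishing of $F[p]$ at $\lambda$ and simply read off the order of vanishing of $F[p_\sigma]$.

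Concretely, since $p(\lambda)\neq 0$ and $\lambda$ is a zero of $Q[p]=F[p]/p^2$ of multiplicity exactly~$r$, the numerator $F[p]$ vanishes at $\lambda$ to order exactly $r$, so $F'[p]$ vanishes there to order exactly $r-1\geqslant 1$. The next step is to verify $p'_\sigma(\lambda)\neq 0$; for this I would reproduce verbatim the short argument from the proof of Theorem~\ref{Theorem.multiple.zero.Q.and.Q1}: from $F[p](\lambda)=0$ together with $p(\lambda), p'(\lambda)\neq 0$ one obtains $p''(\lambda)/p'(\lambda)=p'(\lambda)/p(\lambda)$, and the hypothetical equality $p'_\sigma(\lambda)=0$ would force this common value to equal $-\sigma$, whence $p_\sigma(\lambda)=0$, contradicting the assumption on $\sigma$.

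With $p'_\sigma(\lambda)\neq 0$ in hand, I would compare orders of vanishing on the right-hand side of the identity: the first term $p'_\sigma(z)F'[p](z)$ contributes a zero of order exactly $r-1$ at $\lambda$, while the second term $(\sigma p'_\sigma(z)-p''_\sigma(z))F[p](z)$ contributes a zero of order at least $r$. Hence the right-hand side, and therefore the left-hand side, vanishes at $\lambda$ to order exactly $r-1$. Dividing by $p'(\lambda)\neq 0$ transfers this precise order to $F[p_\sigma]$, and since $p_\sigma(\lambda)\neq 0$ the rational function $Q[p_\sigma]=F[p_\sigma]/p_\sigma^{2}$ has $\lambda$ as a zero of the same multiplicity $r-1$.

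I do not foresee any genuine obstacle: the structural identity~\eqref{derivatives} has already been established, and the auxiliary non-vanishing $p'_\sigma(\lambda)\neq 0$ is an immediate by-product of hypotheses already used in Theorem~\ref{Theorem.multiple.zero.Q.and.Q1}. The only point worth spelling out is the role of the hypothesis $r\geqslant 2$, which guarantees $r-1\geqslant 1$ so that $\lambda$ is genuinely a zero of $Q[p_\sigma]$ and not merely a point where the order-comparison argument still runs trivially.
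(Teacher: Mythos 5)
Your proposal is correct and is precisely the argument the paper intends: the corollary is stated as an immediate consequence of identity~\eqref{derivatives}, read in the reverse direction exactly as you do, with the non-vanishing of $p'_\sigma(\lambda)$ borrowed from the proof of Theorem~\ref{Theorem.multiple.zero.Q.and.Q1}. The order-of-vanishing comparison (first term exactly $r-1$, second term at least $r$) and the reduction from $Q$ to $F$ via $p(\lambda)\neq0$, $p_\sigma(\lambda)\neq0$ are all as the paper implicitly requires, so nothing is missing.
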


\setcounter{equation}{0}

%%%%%%%%%%%%%%%%%%%%%%%%%%%%%%%%%%%%%%%%%%%%%%%%%%%%%%%%%%%%%%%%%%%%%%%%%%%%%%%%%%
\section{Relations between zeroes of $Q[p]$ and $p_{\sigma}$}\label{section:Zeroes.of.Q.and.p_sigma}
%%%%%%%%%%%%%%%%%%%%%%%%%%%%%%%%%%%%%%%%%%%%%%%%%%%%%%%%%%%%%%%%%%%%%%%%%%%%%%%%%%

It turns out that multiple zeroes of $Q[p]$ have a relation with multiple zeroes of polynomials $p_{\sigma}$ defined in~\eqref{p.sigma}.
%%%%%%%%%%%%%%%%%%%%%%%%%%%%%%%%%%%%%%%%%%%%%%%%%%%%%%%%%%%%%%%%%%%%%%%%%%%%%%%%%%%%%%%%%%%%%%%%%%%%%%%%%%%%%%%%%%%%%%%%%%%%%%%%%%%%%%%%
\begin{theorem}\label{Thm.multiple.zero.Q}
Let $p$ be a real polynomial, and $\lambda\in\mathbb{C}$, $p(\lambda)\neq0$. The number $\lambda$ is a zero of the
function~$Q[p]$ of multiplicity $r\geqslant1$ if and only if there exists $\sigma\in\mathbb{R}$ such that $\lambda$
is a zero of the polynomial $p_{\sigma}$ of multiplicity $r+1$.
\end{theorem}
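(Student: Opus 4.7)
My plan is to reduce the statement to a purely local claim near $\lambda$ by the substitution $u(z) = e^{\sigma z}p(z)$. A direct computation gives $u'(z) = e^{\sigma z}p_\sigma(z)$ and $F[u](z) := u(z)u''(z) - (u'(z))^2 = e^{2\sigma z}F[p](z)$ (the cross-terms $2\sigma pp'$ and $\sigma^2 p^2$ cancel between $uu''$ and $(u')^2$). Since $e^{\sigma z}$ is nowhere zero, the order of $\lambda$ as a zero of $p_\sigma$ equals its order as a zero of $u'$, and similarly the order of $F[p]$ at $\lambda$ equals that of $F[u]$. Because $p(\lambda)\neq 0$ we have $u(\lambda)\neq 0$, and because $Q[p] = F[p]/p^2$ the multiplicities of $Q[p]$ and $F[p]$ at $\lambda$ coincide. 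Thus the theorem reduces to the local claim: for a holomorphic function $u$ with $u(\lambda)\neq 0$, the derivative $u'$ has a zero at $\lambda$ of order exactly $r+1$ if and only if $F[u]$ has a zero at $\lambda$ of order exactly $r$.

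The local claim is a one-line Taylor-expansion argument. If $u(\lambda)\neq 0$ and $u'$ vanishes at $\lambda$ to order exactly $m\geq 1$ (so $u^{(m+1)}(\lambda)\neq 0$), then $u''$ has order $m-1$ at $\lambda$ while $(u')^2$ has order $2m$; hence the leading term of $F[u] = uu'' - (u')^2$ comes from $uu''$ and equals $u(\lambda)u^{(m+1)}(\lambda)(z-\lambda)^{m-1}/(m-1)!$, which is nonzero. Therefore $F[u]$ has order exactly $m-1$ at $\lambda$, and taking $m = r+1$ gives both directions of the iff. The $(\Leftarrow)$ direction of the theorem now follows immediately. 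For $(\Rightarrow)$, assume $\lambda$ is a zero of $Q[p]$ of multiplicity $r$; the unique $\sigma$ making $p_\sigma(\lambda) = 0$ (equivalently $u'(\lambda) = 0$) is $\sigma = -p'(\lambda)/p(\lambda)$, and the local claim, applied in reverse, forces $u'$ (hence $p_\sigma$) to vanish at $\lambda$ to order exactly $r+1$.

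The main obstacle is verifying that the constructed $\sigma = -p'(\lambda)/p(\lambda)$ actually lies in $\mathbb{R}$, as the statement demands. For $\lambda\in\mathbb{R}$ this is automatic from the reality of $p$. For nonreal $\lambda$ an additional argument is required, exploiting the reality of $p$: since $F[p]$ is a real polynomial, $\bar\lambda$ is also a zero of $Q[p]$ of the same multiplicity $r$, so the candidate values of $\sigma$ constructed from $\lambda$ and from $\bar\lambda$ are complex conjugates; the higher-order vanishing conditions $F[p]^{(j)}(\lambda) = 0$ for $1\leq j<r$ then have to be leveraged to identify these two candidates. This reality check is the one non-formal step in the argument and is where care is needed.
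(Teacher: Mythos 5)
Your route is genuinely different from the paper's and, for the bulk of the argument, cleaner. The paper proves the ``if'' direction by extracting the chain of ratios $p^{(j+1)}(\lambda)/p^{(j)}(\lambda)=-\sigma$ and then running an induction through Theorem~\ref{Theorem.multiple.zero.Q.and.Q1}, which itself rests on the identities \eqref{Theorem.multiple.zero.Q.and.Q1.proof.1}--\eqref{derivatives} relating $F[p]$, $F[p']$ and $F[p_\sigma]$; the ``only if'' direction is a second induction on \eqref{derivatives}. Your observation that $u=e^{\sigma z}p$ satisfies $u'=e^{\sigma z}p_\sigma$ and $F[u]=e^{2\sigma z}F[p]$, combined with the one-line order count for $F[u]=uu''-(u')^2$ at a point where $u(\lambda)\neq0$ (the term $uu''$ has order exactly $m-1$, the term $(u')^2$ has order $2m>m-1$), collapses both inductions into a single Taylor-coefficient comparison. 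The computations check out, the cases $\sigma=0$ and $p'(\lambda)=0$ are absorbed automatically, and this is a real simplification of the paper's argument.

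However, the reality of $\sigma$, which you correctly isolate as the one unfinished step, is not something that ``care'' can repair: the ``only if'' direction with $\sigma\in\mathbb{R}$ is actually false for nonreal $\lambda$. Take $p(z)=z^3+z$, so that $F[p](z)=-3z^4-1$, and let $\lambda=3^{-1/4}e^{i\pi/4}$, a simple zero of $Q[p]$ with $p(\lambda)\neq0$. The unique $\sigma$, real or complex, with $p_\sigma(\lambda)=0$ is $\sigma=-p'(\lambda)/p(\lambda)=-3^{3/4}e^{-i\pi/12}\notin\mathbb{R}$, so no real $\sigma$ makes $\lambda$ a double zero of $p_\sigma$. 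Your proposed repair via conjugate symmetry cannot succeed either: the candidates built from $\lambda$ and from $\bar\lambda$ are distinct complex conjugates, and the higher-order vanishing conditions give no further constraint forcing them to coincide. Note that the paper's own proof has the identical lacuna --- it introduces $\sigma:=-p'(\lambda)/p(\lambda)$ without checking that it is real --- so the statement should be read either with $\sigma\in\mathbb{C}$ when $\lambda\notin\mathbb{R}$, or with the ``only if'' direction restricted to real $\lambda$; only those weaker forms are used later in the paper (Corollary~\ref{Corol.finite.number.multiple roots} and Theorems~\ref{Theorem.real.zeroes.p.sigma}--\ref{Theorem.real.zeroes.Q.lower.bound} invoke the ``if'' direction, and multiplicity information only at real points). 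With that reading, your argument is complete and shorter than the original.
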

%%%%%%%%%%%%%%%%%%%%%%%%%%%%%%%%%%%%%%%%%%%%%%%%%%%%%%%%%%%%%%%%%%%%%%%%%%%%%%%%%%%%%%%%%%%%%%%%%%%%%%%%%%%%%%%%%%%%%%%%%%%%%%%%%%%%%%%%
\begin{proof}
Suppose first that $\lambda$ is a zero of multiplicity $r+1$ of $p_{\sigma}$ for some $\sigma\in\mathbb{R}\setminus\{0\}$. Then we have
\begin{equation*}
p_{\sigma}^{(j)}(\lambda)=p^{(j+1)}(\lambda)+\sigma p^{(j)}(\lambda)=0,\qquad j=0,1,\ldots,r,
\end{equation*}
and
\begin{equation*}
p_{\sigma}^{(r+1)}(\lambda)=p^{(r+2)}(\lambda)+\sigma p^{(r+1)}(\lambda)\neq0.
\end{equation*}
Consequently, the following identities hold
\begin{equation}\label{Lemma.multiple.zero.Q.proof.1}
\dfrac{p^{(j+1)}(\lambda)}{p^{(j)}(\lambda)}=\dfrac{p^{(j)}(\lambda)}{p^{(j-1)}(\lambda)}=-\sigma,\qquad j=1,\ldots,r,
\end{equation}
since $p^{(j)}(\lambda)\neq0$, $j=0,1,\ldots,r+1$ due to $\sigma\neq0$ by assumption.
Now identities~\eqref{Lemma.multiple.zero.Q.proof.1} imply
\begin{equation}\label{Lemma.multiple.zero.Q.proof.2}
Q\left[p^{(j-1)}\right](\lambda)=0,\qquad j=1,\ldots,r.
\end{equation}

Moreover, since $p_{\sigma}^{(r+1)}(\lambda)\neq0$, one has
\begin{equation*}
\dfrac{p^{(r+2)}(\lambda)}{p^{(r+1)}(\lambda)}\neq-\sigma.
\end{equation*}
Therefore,
\begin{equation}\label{Lemma.multiple.zero.Q.proof.3}
F'\left[p^{(r-1)}\right](\lambda)=p^{(r-1)}(\lambda)p^{(r+2)}(\lambda)-p^{(r)}(\lambda)p^{(r+1)}(\lambda)\neq0,
\end{equation}
otherwise, we would have
\begin{equation*}
\dfrac{p^{(r+2)}(\lambda)}{p^{(r+1)}(\lambda)}=\dfrac{p^{(r)}(\lambda)}{p^{(r-1)}(\lambda)}=-\sigma,
\end{equation*}
a contradiction. Thus, inequality~\eqref{Lemma.multiple.zero.Q.proof.3} implies
\begin{equation*}
Q'\left[p^{(r-1)}\right](\lambda)=\dfrac{p^{(r-1)}(\lambda)p^{(r+2)}(\lambda)-p^{(r)}(\lambda)p^{(r+1)}(\lambda)}{\left[p^{(r-1)}(\lambda)\right]^2}\neq0.
\end{equation*}

Consequently, $\lambda$ is a simple zero of $Q[p^{(r-1)}]$. Now from~\eqref{Lemma.multiple.zero.Q.proof.2} and
Theorem~\ref{Theorem.multiple.zero.Q.and.Q1} it follows that $\lambda$ is a zero of $Q[p^{(r-j)}]$ of multiplicity $j$,
$j=1,\ldots,r$. That is, $\lambda$ is a zero of $Q[p]$ of multiplicity $r$:
\begin{equation}\label{Lemma.multiple.zero.Q.proof.4}
Q^{(j)}\left[p\right](\lambda)=0,\qquad j=0,1,\ldots,r-1,
\end{equation}
as required.

If now $\sigma=0$, that is, if $p^{(j)}(\lambda)=0$, $j=1,\ldots,r+1$, and $p^{(r+2)}(\lambda)\neq0$, then
from~\eqref{main.function.2} it follows that $\lambda$ is a zero of $Q[p]$ of multiplicity $r$,
so identities~\eqref{Lemma.multiple.zero.Q.proof.4} hold in this case, as well.

\vspace{3mm}

Conversely, let $\lambda$ be a zero of $Q[p]$ of multiplicity $r$, so identities~\eqref{Lemma.multiple.zero.Q.proof.4} hold.
Since $p(\lambda)\neq0$ by assumption, these identities can be rewritten as follows
\begin{equation*}
F^{(j)}\left[p\right](\lambda)=0,\qquad j=0,1,\ldots,r-1.
\end{equation*}
If $p'(\lambda)=0$, then from~\eqref{main.function.2} we have $p^{(j)}(\lambda)=0$, $j=1,\ldots,r+1$,
and $p^{(r+2)}(\lambda)\neq0$, so the statement of the theorem holds in this case.

Let now $p'(\lambda)\neq0$. Then the identity $F[p](\lambda)=0$ implies
\begin{equation*}
\dfrac{p'(\lambda)}{p(\lambda)}=\dfrac{p''(\lambda)}{p'(\lambda)}.
\end{equation*}
So we can introduce the number
\begin{equation*}
\sigma\stackrel{def}{=}-\dfrac{p'(\lambda)}{p(\lambda)}=-\dfrac{p''(\lambda)}{p'(\lambda)}.
\end{equation*}
Thus, we get
\begin{equation*}
p_{\sigma}(\lambda)=p'(\lambda)+\sigma p(\lambda)=0,
\end{equation*}
and
\begin{equation*}
p'_{\sigma}(\lambda)=p''(\lambda)+\sigma p'(\lambda)=0.
\end{equation*}

Now formula~\eqref{derivatives} yields
\begin{equation*}
F^{(j)}[p']=0,\qquad j=0,1,\ldots,r-2.
\end{equation*}
In particular,
\begin{equation*}
F[p'](\lambda)=p'(\lambda)p'''(\lambda)-\left[p''(\lambda)\right]^2=0,
\end{equation*}
so
\begin{equation*}
\dfrac{p'''(\lambda)}{p''(\lambda)}=\dfrac{p''(\lambda)}{p'(\lambda)}=-\sigma,
\end{equation*}
and therefore,
\begin{equation*}
p''_{\sigma}(\lambda)=p'''(\lambda)+\sigma p''(\lambda)=0.
\end{equation*}

Now by induction we obtain the identities
\begin{equation}\label{Lemma.multiple.zero.Q.proof.5}
F^{(j)}\left[p^{(k)}\right](\lambda)=0,\qquad k=0,1,\ldots,r-1, \ j=0,1,\ldots,r-k-1,
\end{equation}
which give
\begin{equation*}
p^{(k+1)}_{\sigma}(\lambda)=p^{(k+2)}(\lambda)+\sigma p^{(k+1)}(\lambda)=0,\qquad k=0,1,\ldots,r-1,
\end{equation*}
so
\begin{equation*}
p^{(r)}_{\sigma}(\lambda)=p^{(r+1)}(\lambda)+\sigma p^{(r)}(\lambda)=0.
\end{equation*}

Since $\lambda$ is a zero of $Q[p]$ of multiplicity $r$, it is a simple zero of $F\left[p^{(r-1)}\right]$ by~\eqref{Lemma.multiple.zero.Q.proof.5},
that is,
\begin{equation*}
F'\left[p^{(r-1)}\right](\lambda)=p^{(r-1)}(\lambda)p^{(r+2)}(\lambda)-p^{(r)}(\lambda)p^{(r+1)}(\lambda)\neq0,
\end{equation*}
so
\begin{equation*}
\dfrac{p^{(r+2)}(\lambda)}{p^{(r+1)}(\lambda)}\neq\dfrac{p^{(r+1)}(\lambda)}{p^{(r)}(\lambda)}=-\sigma.
\end{equation*}
%\
Thus, $p^{(r+1)}_{\sigma}(\lambda)=p^{(r+2)}(\lambda)+\sigma p^{(r+1)}(\lambda)\neq0$, as required.
\end{proof}

As a by-product, we obtain the following curious fact.
\begin{corol}\label{Corol.finite.number.multiple roots}
Let $p$ be a real polynomial with simple zeroes. Then the number of complex points that
can be multiple zeroes of the polynomials of the family~$p_{\sigma}$
does not exceed $2\deg p-2$, and the points do not depend on $\sigma$. Moreover, the number
of real such points is bounded by $Z_{\mathbb{C}}(p)$.
\end{corol}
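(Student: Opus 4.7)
The plan is to reduce the counting of multiple zeros of the family $\{p_\sigma\}_{\sigma\in\mathbb R}$ to counting zeros of the single rational function $Q[p]$ by invoking Theorem~\ref{Thm.multiple.zero.Q}, and then to combine an elementary degree count with the Hawaii conjecture, which is known for polynomials by~\cite{Tyaglov_Hawaii}, to obtain the real bound.

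First I would check that the hypothesis $p(\lambda)\neq 0$ of Theorem~\ref{Thm.multiple.zero.Q} is automatic here: since all zeros of $p$ are simple, $p(\lambda)=0$ would force $p'(\lambda)\neq 0$ and hence $p_\sigma(\lambda)=p'(\lambda)\neq 0$, so $\lambda$ would not even be a zero of $p_\sigma$. Thus any $\lambda$ which is a multiple zero of some $p_\sigma$ automatically satisfies $p(\lambda)\neq 0$, and the ``only if'' direction of Theorem~\ref{Thm.multiple.zero.Q} identifies $\lambda$ as a zero of $Q[p]$. Moreover, the relation $p_\sigma(\lambda)=p'(\lambda)+\sigma p(\lambda)=0$ pins down $\sigma=-p'(\lambda)/p(\lambda)$, uniquely determined by $\lambda$ alone; this already yields the claimed $\sigma$-independence of the set of candidate points.

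Next I would bound that set. Writing $p(z)=a_n z^n+\cdots$, the polynomial $F[p]=pp''-(p')^2$ has leading term $-n\,a_n^2\,z^{2n-2}$, so $\deg F[p]=2\deg p-2$. Under the simple-zero assumption one has $F[p](z_0)=-[p'(z_0)]^2\neq 0$ at every zero $z_0$ of $p$, so no cancellation occurs in $Q[p]=F[p]/p^2$ and the number of distinct zeros of $Q[p]$ is at most $2\deg p-2$. Combined with the previous step, this yields the first claim.

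For the real bound, every real multiple-zero point $\lambda$ of some $p_\sigma$ is, by the above, a real zero of $Q[p]$, and conversely the corresponding parameter $\sigma=-p'(\lambda)/p(\lambda)$ is automatically real because $p$ has real coefficients; so the count of real candidate points is at most $Z_{\mathbb R}(Q[p])$, and the Hawaii conjecture for polynomials from~\cite{Tyaglov_Hawaii} gives $Z_{\mathbb R}(Q[p])\leqslant Z_{\mathbb C}(p)$. The only subtle points I expect are the borderline cases $\sigma=0$ and $p'(\lambda)=0$ that appear in Theorem~\ref{Thm.multiple.zero.Q}; both are already dispatched inside that theorem's proof, so they require no additional argument here.
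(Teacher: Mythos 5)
Your proposal is correct and follows essentially the same route as the paper: Theorem~\ref{Thm.multiple.zero.Q} places every multiple zero of any $p_{\sigma}$ among the zeroes of $F[p]$ (a fixed polynomial of degree $2\deg p-2$, whence the $\sigma$-independence and the first bound), and the real bound then follows from the Hawaii inequality~\eqref{Hawaii.conjecture} established in~\cite{Tyaglov_Hawaii}. Your extra verifications (that $p(\lambda)\neq0$ is automatic for simple-zero $p$, and that $\sigma=-p'(\lambda)/p(\lambda)$ is determined by $\lambda$) are details the paper leaves implicit, not a different argument.
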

\begin{proof}
Since $p$ has no multiple zeroes, any zero of $F[p](z)$ is the zero of $Q[p](z)$. Moreover, $\deg F[p]=2\deg p-2$.
Now by Theorem~\ref{Thm.multiple.zero.Q}, we obtain that any multiple zero of a polynomial $p_{\sigma}$ for
some $\sigma$ is a zero of $F[p]$. The second statement of the corollary follows from the inequality~\eqref{Hawaii.conjecture} and
Theorem~\ref{Thm.multiple.zero.Q}.
\end{proof}

\setcounter{equation}{0}

%%%%%%%%%%%%%%%%%%%%%%%%%%%%%%%%%%%%%%%%%%%%%%%%%%%%%%%%%%%%%%%%%%%%%%%%%%%%%
\section{Cauchy indices}\label{section:Cauchy.indices}
%%%%%%%%%%%%%%%%%%%%%%%%%%%%%%%%%%%%%%%%%%%%%%%%%%%%%%%%%%%%%%%%%%%%%%%%%%%%%

In this section, we establish a generalisation (one direction)
of~\cite[Theorem~1 and Corollary~1]{CravenCsordasSmith} using the Cauchy indices
whose definition and basic properties we present first.

%In this section, we introduce a special counter known as the Cauchy index.

Consider a real rational function $R$ which may  have a pole at $\infty$.
\begin{definition}\label{Def.Cauchy.index.at.pole}
The quantity
\begin{equation}\label{Cauchy.index.at.odd.pole}
\Ind\nolimits_{\omega}(R)\eqbd
\begin{cases}
   \; +1&\;\text{if}\quad R(\omega-0)<0<R(\omega+0),\\
   \; -1&\;\text{if}\quad R(\omega-0)>0>R(\omega+0),
\end{cases}
\end{equation}
is called the \textit{index\/} of the function $R$ at its
\textit{real\/} pole $\omega$ of \textit{odd\/} order.

We also set
\begin{equation}\label{Cauchy.index.at.even.pole}
\Ind\nolimits_{\omega}(R)\eqbd 0
\end{equation}
if $\omega$ is a real pole of the function $R$ of \textit{even\/}
order.
\end{definition}

Suppose that the function $R$ has $m$ real poles in total, viz.,
$\omega_1<\omega_2<\dots<\omega_m$.

%
%\smallskip

\begin{definition}\label{Def.Cauchy.index.on.interval}
The quantity
\begin{equation}\label{Cauchy.index.on.interval}
\Ind\nolimits_a^b(R)\eqbd \sum\limits_{i\,\colon\,a<\omega_i<b}
\Ind\nolimits_{\omega_i}(R).
\end{equation}
is called the \textit{Cauchy index} of the function $R$ on the
interval~$(a,b)$.
\end{definition}

We are primarily interested in the quantity $\IndC(R)$, the Cauchy
index of $R$ on the real line. However, since the function $R$
may have a pole at the point $\infty$, it is convenient for us to
consider this pole as \textit{real}. From this point of view, let
us introduce the index at $\infty$ following, e.g.,~\cite{Barkovsky.2,Holtz_Tyaglov}.
To do so, we consider the function~$R$ as a map on the \textit{projective
line} $\PR^1\eqbd \mathbb{R}^1\cup\{\infty\}$ into itself. So, if the
function $R$ has a pole at~$\infty$, then we let
\begin{equation}\label{Cauchy.index.at.infty}
\IndI(R) \eqbd
\begin{cases}
\; +1 & \quad\text{if}\quad R(+\infty)<0<R(-\infty),\\
\; -1& \quad\text{if}\quad R(+\infty)>0>R(-\infty),\\
\; \quad\! 0 & \quad\text{if}\quad \sgn R(+\infty)=\sgn R(-\infty).
\end{cases}
\end{equation}
Thus, the generalised Cauchy  index of the function $R$ on the
projective real line is
\begin{equation}\label{Cauchy.index.on.projective.line}
\IndPR(R)\eqbd \IndC(R)+\IndI(R).
\end{equation}

\smallskip \begin{remark}\label{remark.2.1}
Obviously, if the function $R$ has no pole at $\infty$, then the generalised
Cauchy index  $\IndPR(R)$ coincides with the usual Cauchy index $\IndC(R)$.
\end{remark}

Following~\cite{Barkovsky.2}, we list a few properties of
generalised Cauchy indices, which will be of use later.
First, note that a polynomial $q(z)=cz^{\nu}+\cdots$
$(\nu=\deg q)$ can be viewed as a rational function with
a single pole at $\infty$, hence
\begin{equation}\label{Cauchy.index.of.poly}
\IndI(q)=
\begin{cases}
-\sgn c\; &\text{if}\quad\nu\quad\text{is odd},\\
\;\;\; 0     \; &\text{if}\quad\nu\quad\text{is even}.\\
\end{cases}
\end{equation}

The following theorem (see~\cite{Barkovsky.2,Holtz_Tyaglov} and references there)
collects all properties of Cauchy indices that we need.
\begin{theorem}\label{Th.Cauchy.index.properties}
Let $R$ be a real rational function.
\begin{itemize}
\item[1)] If $d$ is a real constant, then $\IndPR(d+R)=\IndPR(R)$.
\item[2)] If $q$ is a real polynomial and $|R(\infty)|<\infty$,
then $\IndPR(q+R)=\IndC(R)+\IndI(q)$.
\item[3)] If $R_1$ and $R_2$ are real rational functions that have no
real poles in common, then
\begin{equation*}%\label{Cauchy.index.of.functions.sum}
\IndC(R_1+R_2)=\IndC(R_1)+\IndC(R_2).
\end{equation*}
\item[4)] $\IndPR\left(-\dfrac1R\right)=\IndPR(R)$.
\end{itemize}
\end{theorem}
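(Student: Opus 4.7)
The plan is to verify each of the four properties separately. All four rest on a single local observation: if $R$ has a real pole at $\omega$ and $S$ is bounded in some neighbourhood of $\omega$, then $R+S$ has a pole at $\omega$ of the same order, and the sign of $R+S$ in a punctured neighbourhood of $\omega$ is controlled by~$R$, so $\Ind_\omega(R+S)=\Ind_\omega(R)$. The analogous statement holds at the point at infinity: if $R$ is unbounded at $\infty$ and $S$ is bounded there, then $\IndI(R+S)=\IndI(R)$.

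Properties~1)--3) follow from this observation. For~1), the real poles of $d+R$ coincide with those of $R$, and the constant $d$ is bounded at every real pole and at $\infty$, so the index at each such point is unaffected, and $\IndPR(d+R)=\IndPR(R)$. For~2), the polynomial $q$ has no finite poles, so the finite real poles of $q+R$ coincide with those of $R$, and $q$ is bounded near each of them; summing gives $\IndC(q+R)=\IndC(R)$. Because $R$ is finite at infinity, $q+R$ has the same leading behaviour at infinity as $q$, which yields $\IndI(q+R)=\IndI(q)$ (including the trivial case when $q$ is constant and both sides are zero). For~3), the real poles of $R_1+R_2$ form the disjoint union of the real poles of $R_1$ and $R_2$, and at each pole of one summand the other is bounded, so the indices at the individual poles simply add.

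The main obstacle is property~4), since the transformation $R\mapsto -1/R$ changes the set of real poles altogether: the poles of $-1/R$ are precisely the real zeroes of $R$ of odd order, and vice versa. The cleanest approach is topological: viewing $R$ as a continuous map $\PR^1\to\PR^1$, the quantity $\IndPR(R)$ is, up to a fixed sign convention, the topological degree of this map, since each real pole of odd order and the point at infinity contribute $\pm 1$ depending on the direction in which the graph of $R$ crosses the point $\infty$ of the target $\PR^1$. The map $w\mapsto -1/w$ is a M\"obius transformation corresponding to a matrix of positive determinant, hence an orientation-preserving self-homeomorphism of $\PR^1$; composing $R$ with it therefore preserves the degree and yields $\IndPR(-1/R)=\IndPR(R)$. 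An alternative elementary proof matches, at each real zero $\omega$ of $R$ (which becomes a real pole of $-1/R$), the direction of the sign change of $R$ through $0$ with the direction of the jump of $-1/R$ through $\infty$, and likewise handles the point at infinity, where the roles of $0$ and $\infty$ are exchanged; this direct verification is routine but lengthier, which is why the topological viewpoint is preferable.
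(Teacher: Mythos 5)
The paper does not prove this theorem at all: it is imported verbatim from the cited references (Barkovskiy's lecture notes and Holtz--Tyaglov), so there is no internal proof to compare yours against. On its own terms your argument is correct. Items 1)--3) follow exactly as you say from the single local observation that adding a function bounded near a point does not change the order of a pole there, nor the signs of the one-sided infinite limits, hence not the index; and the disjointness of the pole sets in 3) guarantees no cancellation. For 4), your topological-degree argument is sound and is arguably cleaner than the usual elementary derivation: once one checks that $\Ind_{\omega}(R)$ (respectively $\IndI(R)$) equals minus the local degree of $R\colon\PR^1\to\PR^1$ at each point of the fibre over $\infty$ --- zero at even-order poles, $\pm1$ at odd-order ones, with the sign conventions at finite poles and at $\infty$ matching, as they do here --- the identity $\IndPR(R)=-\deg R$ follows, and composing with the orientation-preserving M\"obius map $w\mapsto-1/w$ preserves the degree. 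Two small points to tidy up: the poles of $-1/R$ are \emph{all} real zeroes of $R$, not only those of odd order (the even-order ones are simply poles of index zero), so the phrase ``precisely the real zeroes of odd order'' should be amended; and the sign-consistency check between the convention \reff{Cauchy.index.at.odd.pole} at finite poles and the convention \reff{Cauchy.index.at.infty} at $\infty$ is the one step you declare routine that actually carries the content of the identification with the degree, so it deserves at least one explicit verification (e.g.\ on $R(x)=x$ and $R(x)=1/x$). Neither point is a gap in substance.
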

%

%Any rational function $G$ can be represented as the following
%continued fraction
%%
%\begin{equation}\label{continued.fraction.general.via.Sturm}
%G(z)=q_0(z)+\dfrac1{q_1(z)-\cfrac1{q_2(z)-\cfrac1{q_3(z)-\cfrac1{\ddots-\cfrac1{q_k(z)}}}}}
%\end{equation}
%
%Here the polynomials $q_i$ have the form
%
%\begin{equation*}%\label{Sturm.cont.frac.quotients}
%q_j(z)=\alpha_jz^{n_j}+\cdots,\quad\;\;\;  \alpha_j\neq 0,\qquad
%j=0,1,2,\ldots,k,
%\end{equation*}
%
%with $n_1+n_2+\cdots+n_k=r$
%and $n_i\geq 1$, $i=1,\ldots,k$, where $r$ is the number of poles of the function $G$ counted
%with multiplicities, and $q_0$ can be a (zero) constant.
%Then the following result holds:
%
%\smallskip \begin{theorem}[\cite{Barkovsky.2,Holtz_Tyaglov}]\label{Th.index.via.quotients}
%If a rational function $G$ is represented by a continued
%fraction~\eqref{continued.fraction.general.via.Sturm}, then
%
%\begin{equation*}%\label{index.via.quotients}
%\IndPR(G)=\IndI(q_0)-\sum_{j=1}^{k}\IndI(q_j).
%\end{equation*}
%
%\end{theorem}
%\smallskip

Now we are in a position to establish an extension of an important result by T.\,Craven, G.\,Csordas
and W.\,Smith~\cite[Theorem~1 and Corollary~1]{CravenCsordasSmith} for polynomials (see also~\cite[Problem~133]{GunterKuzmin}
and~\cite[Problem~728]{FaddeevSominsky}).

\begin{theorem}\label{Theorem.real.zeroes.p.sigma}
Let $p$ be a real polynomial with $Z_{\mathbb{C}}(p)=2m$. If there exists $\sigma\in\mathbb{R}$ such that
$Q[p_{\sigma}](x)<0$ for all $x\in\mathbb{R}$ where it is defined, then
\begin{equation}\label{Number.real.zeroes.Q.ext}
Z_{\mathbb{R}}\left(Q[p]\right)=2m-2m_{\sigma},
\end{equation}
where $2m_{\sigma}=Z_{\mathbb{C}}\left(p_{\sigma}\right)$. Moreover, the real zeroes of $Q[p]$ different
from zeroes of $p_{\sigma}$ of multiplicity greater than $2$ (if any) are simple.
\end{theorem}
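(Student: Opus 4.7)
The plan is to set up the auxiliary rational function
\[
R \eqbd \dfrac{p'_{\sigma}}{p_{\sigma}} - \dfrac{p'}{p} \;=\; -\dfrac{F[p]}{p\,p_{\sigma}}
\]
(the second equality follows from $p'p_{\sigma}-p'_{\sigma}p = (p')^{2}-pp''=-F[p]$) and to compute $\IndPR(R)$ in two different ways, then equate them. The crucial identity is $R'(z)=Q[p_{\sigma}](z)-Q[p](z)$, so that at any real zero $\xi$ of $Q[p]$ with $p(\xi)\neq0$ and $p_{\sigma}(\xi)\neq0$ one has $R'(\xi)=Q[p_{\sigma}](\xi)<0$ by the hypothesis; this is the only place where the sign assumption enters.

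The ``moreover'' part follows immediately from Corollary~\ref{Corol.multiple.zero.Q.and.Q.sigma.converse}: if $\xi$ were a real zero of $Q[p]$ of multiplicity $r\geqslant2$ with $p(\xi)\neq0$, $p'(\xi)\neq0$ and $p_{\sigma}(\xi)\neq0$, then $\xi$ would be a zero of $Q[p_{\sigma}]$ of multiplicity $r-1\geqslant1$, contradicting $Q[p_{\sigma}]<0$. Hence every multiple real zero of $Q[p]$ must be a zero of $p_{\sigma}$, and Theorem~\ref{Thm.multiple.zero.Q} then forces its multiplicity in $p_{\sigma}$ to equal $r+1\geqslant3$.

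Next I would compute $\IndPR(R)$ pole by pole. Reducing (without loss of generality, by a small perturbation) to the case in which $p$ has only simple zeros, the finite real poles of $R$ are the $k$ distinct real zeros of $p$ (residue $-1$, index $-1$ each) and the $l$ distinct real zeros of $p_{\sigma}$ (residue equal to the multiplicity, hence positive, index $+1$ each); and since $R=O(1/z^{2})$ at infinity for $\sigma\neq0$, there is no contribution from~$\infty$. This gives $\IndPR(R)=l-k$, and property~4 of Theorem~\ref{Th.Cauchy.index.properties} together with the obvious $\Ind(-f)=-\Ind(f)$ yields $\IndPR(1/R)=-(l-k)=k-l$.

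Finally I evaluate $\IndPR(1/R)$ directly. Split the real zeros of $Q[p]$ into type~(A), those at which $p_{\sigma}$ does not vanish (necessarily simple, by the second paragraph), and type~(B), those at a zero of $p_{\sigma}$ of multiplicity $\mu_{j}=r+1\geqslant2$. At each type~(A) zero $\xi$, $R$ has a simple zero and $1/R$ a simple pole with $\Ind\nolimits_{\xi}(1/R)=\sgn(R'(\xi))=\sgn(Q[p_{\sigma}](\xi))=-1$. At each type~(B) zero $\xi$ a factor $(z-\xi)^{r}$ cancels between $F[p]$ and $p_{\sigma}$ in $R=-F[p]/(p\,p_{\sigma})$, so $R$ has a simple pole there while $1/R$ has a simple zero that contributes nothing to $\IndPR(1/R)$. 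Writing $b$ for the number of type~(A) zeros I therefore get $\IndPR(1/R)=-b$, whence $b=l-k$. Adding this to the multiplicity contribution $\sum_{j\colon\mu_{j}\geqslant2}(\mu_{j}-1)=Z_{\mathbb{R}}(p_{\sigma})-l$ from type~(B),
\[
Z_{\mathbb{R}}(Q[p]) \;=\; b + \bigl(Z_{\mathbb{R}}(p_{\sigma})-l\bigr) \;=\; (l-k)+Z_{\mathbb{R}}(p_{\sigma})-l \;=\; Z_{\mathbb{R}}(p_{\sigma})-k \;=\; 2m-2m_{\sigma},
\]
as required. The main bookkeeping obstacle I anticipate is the borderline case $\sigma=0$: there $\deg p_{\sigma}=n-1$, so $1/R$ acquires a simple pole at $\infty$ contributing $+1$ to $\IndPR(1/R)$, which exactly compensates the degree shift and preserves the final identity; the reduction from arbitrary $p$ to the case of simple zeros of $p$ also needs a short separate justification.
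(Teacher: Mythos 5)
Your overall strategy --- expanding $R=p'_{\sigma}/p_{\sigma}-p'/p$ into partial fractions to compute $\IndC(R)$, inverting via property 4) of Theorem~\ref{Th.Cauchy.index.properties}, and using $R'(\xi)=Q[p_{\sigma}](\xi)<0$ to show that every real pole of $1/R$ carries the same index --- is exactly the paper's proof (there $R=H'/H$ with $H=p_{\sigma}/p$, and the paper works with $-H/H'$ instead of $1/R$, which merely flips all signs). The genuine gap is the reduction ``without loss of generality, by a small perturbation'' to the case where $p$ has only simple zeros. This step is load-bearing in your bookkeeping: it is what makes the real zero sets of $p$ and $p_{\sigma}$ disjoint (a real zero of $p$ of multiplicity $n_j\geqslant2$ is a zero of $p_{\sigma}$ of multiplicity $n_j-1$, with residue $(n_j-1)-n_j=-1$ in $R$, so your two lists of poles overlap and the residues do not add as stated), and it is what makes $k$ equal to $Z_{\mathbb R}(p)$ counted with multiplicity, without which your final identity $Z_{\mathbb R}(Q[p])=Z_{\mathbb R}(p_{\sigma})-k$ fails (a triple real zero of $p$ contributes $2$ to $Z_{\mathbb R}(p_{\sigma})$ and $1$ to $k$ but is a pole, not a zero, of $Q[p]$). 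And the reduction itself is not justified: the hypothesis $Q[p_{\sigma}]<0$ forces $F[p_{\sigma}]\leqslant0$ on $\mathbb R$ with even-order contact at every multiple real zero of $p_{\sigma}$, so it is not an open condition; a perturbation can also turn such a multiple real zero into a complex pair, changing $2m_{\sigma}$; and even granting a perturbation $\tilde p$ preserving the hypothesis and the counts, you would still need $Z_{\mathbb R}(Q[\tilde p])=Z_{\mathbb R}(Q[p])$, which is essentially the quantity being computed and is not stable in any obvious way. The paper avoids all of this by keeping the multiple real zeros $\mu_j$ of $p$ explicitly in the partial-fraction expansion, where they contribute residue $-1$ exactly like simple zeros, and by summing only over the multiple zeros $s_i$ of $p_{\sigma}$ that are \emph{not} zeros of $p$; you should do the same, at the cost of a few extra terms.

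A second, smaller gap: Corollary~\ref{Corol.multiple.zero.Q.and.Q.sigma.converse} requires $p'(\xi)\neq0$, so your ``moreover'' argument does not exclude a multiple real zero $\xi$ of $Q[p]$ with $p(\xi)\neq0$ but $p'(\xi)=0$; then $p''(\xi)=0$ and $p'_{\sigma}(\xi)=0$ while $p_{\sigma}(\xi)=\sigma p(\xi)\neq0$ for $\sigma\neq0$, so $\xi$ is of type (A) and its simplicity must still be proved. As in Case I of the paper's proof, $p'''(\xi)=0$ would give $F[p_{\sigma}](\xi)=p_{\sigma}(\xi)p''_{\sigma}(\xi)-[p'_{\sigma}(\xi)]^2=0$, contradicting $Q[p_{\sigma}](\xi)<0$; hence $F'[p](\xi)=p(\xi)p'''(\xi)\neq0$ and $\xi$ is simple. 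Finally, a harmless slip: $p'_{\sigma}p-p'p_{\sigma}=F[p]$, so $R=+F[p]/(p\,p_{\sigma})$ rather than $-F[p]/(p\,p_{\sigma})$; this does not propagate, since your index computations use the partial-fraction form of $R$ and the value $R'(\xi)$.
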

Note that $2m_{\sigma}\leqslant2m$ by~\eqref{Propos.sigma.p.zeroes}.
\begin{proof}
%Without loss of generality, we suppose that $p$ has only simple zeroes.

Suppose first that $\sigma\neq0$, so $\deg p_{\sigma}=\deg p=:n$. By assumption
\begin{equation}\label{Q1.negativity}
Q[p_{\sigma}](x)=\dfrac{p_{\sigma}(x)p''_{\sigma}(x)-[p'_{\sigma}(x)]^2}{p^2_{\sigma}(x)}<0
\end{equation}
for any $x\in\mathbb{R}$ such that $p_{\sigma}(x)\neq0$.

Consider the function
\begin{equation*}
H(z)=\dfrac{p_{\sigma}(z)}{p(z)}=\dfrac{p'(z)}{p(z)}+\sigma.
\end{equation*}
It is easy to see that
\begin{equation}\label{H.derivartive.1}
H'(z)=Q[p](z)=\dfrac{p(z)p''(z)-[p'(z)]^2}{p^2(z)},
\end{equation}
and
\begin{equation}\label{H.log.derivartive.1}
\dfrac{H'(z)}{H(z)}=\dfrac{p(z)p''(z)-[p'(z)]^2}{p(z)p_{\sigma}(z)}=\dfrac{p(z)p'_{\sigma}(z)-p'(z)p_{\sigma}(z)}{p(z)p_{\sigma}(z)}=\dfrac{p'_{\sigma}(z)}{p_{\sigma}(z)}-\dfrac{p'(z)}{p(z)}.
\end{equation}
Since $\dfrac{H'(z)}{H(z)}\to0$ as $|z|\to\infty$, from~\eqref{Cauchy.index.on.projective.line} we have
\begin{equation}\label{Theorem.real.zeroes.p.sigma.proof.0}
\IndPR\left(\dfrac{H'}{H}\right)=\IndC\left(\dfrac{H'}{H}\right)%=\IndC\left(\dfrac{p'_{\sigma}}{p_{\sigma}}\right)-\IndC\left(\dfrac{p'}{p}\right).
\end{equation}

Suppose that $p(z)$ has $d_1$ simple real zeroes and $d_2$ multiple real zeroes:
\begin{equation*}%\label{Theorem.real.zeroes.p.sigma.proof.0.0.1}
p(z)=a\prod\limits_{i=1}^{d_1}(z-\lambda_i)\prod\limits_{j=1}^{d_2}(z-\mu_j)^{n_j}q(z),
\end{equation*}
where $a>0$, $n_j\geqslant2$, $j=1,\ldots,d_2$, and $q(x)\neq0$ for $x\in\mathbb{R}$. Here $\deg q=2m$. Then
\begin{equation}\label{Theorem.real.zeroes.p.sigma.proof.0.1}
p_{\sigma}(z)=a\sigma\prod\limits_{j=1}^{d_2}(z-\mu_j)^{n_j-1}g(z)h(z),
\end{equation}
where $g(z)$ has only real zeroes by assumption, and $h(x)\neq0$ for $x\in\mathbb{R}$, $\deg h=2m_{\sigma}$. So we have
\begin{equation}\label{Theorem.real.zeroes.p.sigma.proof.1}
\deg g=n-\sum\limits_{j=1}^{d_2}(n_j-1)-2m_{\sigma}=d_1+d_2+\sum\limits_{j=1}^{d_2}(n_j-1)+2m-\sum\limits_{j=1}^{d_2}(n_j-1)-2m_{\sigma}=d_1+d_2+2m-2m_{\sigma}.
\end{equation}
Let
\begin{equation}\label{Theorem.real.zeroes.p.sigma.proof.2}
g(z)=\prod\limits_{k=1}^{l_1}(z-t_k)\prod\limits_{i=1}^{l_2}(z-s_i)^{r_i},
\end{equation}
where $r_i\geqslant2$, $i=1,\ldots,l_2$, and the numbers $t_k$ and $s_i$ are real and different from $\mu_j$.
Then we have
\begin{equation*}
\dfrac{p'_{\sigma}(z)}{p_{\sigma}(z)}=\sum\limits_{j=1}^{d_2}\dfrac{n_j-1}{z-\mu_j}+\sum\limits_{k=1}^{l_1}\dfrac{1}{z-t_k}+
\sum\limits_{i=1}^{l_2}\dfrac{r_i}{z-s_i}+\dfrac{h'(z)}{h(z)},
\end{equation*}
and
\begin{equation*}
\dfrac{p'(z)}{p(z)}=\sum\limits_{i=1}^{d_1}\dfrac{1}{z-\lambda_i}+\sum\limits_{j=1}^{d_2}\dfrac{n_j}{z-\mu_j}+\dfrac{q'(z)}{q(z)}.
\end{equation*}
Consequently, from~\eqref{H.log.derivartive.1} one gets
\begin{equation}\label{H.log.derivartive.2}
\dfrac{H'(z)}{H(z)}=\sum\limits_{k=1}^{l_1}\dfrac{1}{z-t_k}+\sum\limits_{i=1}^{l_2}\dfrac{r_i}{z-s_i}-
\sum\limits_{i=1}^{d_1}\dfrac{1}{z-\lambda_i}-\sum\limits_{j=1}^{d_2}\dfrac{1}{z-\mu_j}+\dfrac{h'(z)}{h(z)}-\dfrac{q'(z)}{q(z)}.
\end{equation}
The polynomials $h$ and $q$ have no real zeroes, so their logarithmic derivatives do not contribute to $\IndC\left(\dfrac{H'}{H}\right)$. Thus,
from Theorem~\ref{Th.Cauchy.index.properties} and from~\eqref{Theorem.real.zeroes.p.sigma.proof.1}--\eqref{H.log.derivartive.2} we obtain
\begin{equation}\label{Theorem.real.zeroes.p.sigma.proof.2.1}
\IndC\left(\dfrac{H'}{H}\right)=l_1+l_2-d_1-d_2=d_1+d_2+2m-2m_{\sigma}-\sum\limits_{i=1}^{l_2}(r_i-1)-d_1-d_2=2m-2m_{\sigma}-\sum\limits_{i=1}^{l_2}(r_i-1).
\end{equation}
Note that $s_i$, $i=1,\ldots,l_2$, is a zero of $Q[p]$ of multiplicity $r_i-1$ by Theorem~\ref{Thm.multiple.zero.Q}.

Furthermore, since
\begin{equation*}
-\dfrac{H(z)}{H'(z)}=Az^2+Bz+C+\sum\limits_{k=0}^{\infty}\dfrac{c_k}{z^k},
\end{equation*}
it follows that $\IndI\left(-\dfrac{H}{H'}\right)=0$. Now from Theorem~\ref{Th.Cauchy.index.properties}, \eqref{Cauchy.index.on.projective.line},
and~\eqref{Theorem.real.zeroes.p.sigma.proof.0} one has
\begin{equation}\label{H/H'.index.1}
\IndC\left(-\dfrac{H}{H'}\right)=\IndPR\left(-\dfrac{H}{H'}\right)=\IndPR\dfrac{H'}{H}=\IndC\dfrac{H'}{H}=2m-2m_{\sigma}-\sum\limits_{i=1}^{l_2}(r_i-1).
\end{equation}

Note that the numbers $s_i$, $i=1,\ldots,l_2$, are not poles of the function $-\dfrac{H}{H'}$, but they are simple
zeroes of $-\dfrac{H}{H'}$.

Let now $\lambda\in\mathbb{R}$ be a pole of $-\dfrac{H}{H'}$. Clearly, $p(\lambda)\neq0$ and $p_{\sigma}(\lambda)\neq0$, and by~\eqref{H.log.derivartive.1} we have
\begin{equation*}%\label{Theorem.real.zeroes.p.sigma.proof.3}
F[p](\lambda)=p(\lambda)p''(\lambda)-[p'(\lambda)]^2=0.
\end{equation*}

\vspace{2mm}

\noindent \textsc{I.}  If $p'(\lambda)=0$, then $p''(\lambda)=0$, so $p'_{\sigma}(\lambda)=0$. At the same time, $p'''(\lambda)\neq0$, since otherwise, we would
have $p''_{\sigma}(\lambda)=0$ that contradicts~\eqref{Q1.negativity}. Thus, in this case
\begin{equation*}
F'[p](\lambda)=p(\lambda)p'''(\lambda)-p'(\lambda)p''(\lambda)\neq0,
\end{equation*}
so $\lambda$ is a simple pole of $-\dfrac{H}{H'}$ (and a simple zero of $Q[p]$), and
\begin{equation}\label{Theorem.real.zeroes.p.sigma.proof.4}
-\dfrac{H(z)}{H'(z)}=\dfrac{A_{\lambda}}{z-\lambda}+O(1)\qquad\text{as}\quad z\to\lambda,
\end{equation}
where by~\eqref{Q1.negativity} and~\eqref{H.log.derivartive.1}, one has
\begin{equation}\label{Theorem.real.zeroes.p.sigma.proof.5}
%\begin{array}{l}
A_{\lambda}=\lim\limits_{z\to\lambda}\dfrac{p(z)p_{\sigma}(z)(z-\lambda)}{p'(z)p_{\sigma}(z)-p(z)p'_{\sigma}(z)}
=\dfrac{p(\lambda)p_{\sigma}(\lambda)}{p''(\lambda)p_{\sigma}(\lambda)-p(\lambda)p''_{\sigma}(\lambda)}=-\dfrac{p_{\sigma}(\lambda)}{p_{\sigma}''(\lambda)}>0,
\end{equation}
since $p'_{\sigma}(\lambda)=0$ as we showed above. Thus, \eqref{Theorem.real.zeroes.p.sigma.proof.4}--\eqref{Theorem.real.zeroes.p.sigma.proof.5} imply
\begin{equation}\label{Theorem.real.zeroes.p.sigma.proof.6}
\Ind\nolimits_{\lambda}\left(-\dfrac{H}{H'}\right)=1,
\end{equation}
according to~\eqref{Cauchy.index.at.odd.pole}.

\vspace{2mm}

\noindent\textsc{II.} Suppose now that $p'(\lambda)\neq0$. Let us show that this is a simple pole of $-\dfrac{H}{H'}$ (a simple zero of $Q[p]$), as well.
Indeed, from~\eqref{H.log.derivartive.1} we have
\begin{equation*}
F[p](\lambda)=p(\lambda)p''(\lambda)-[p'(\lambda)]^2=p(\lambda)p'_{\sigma}(\lambda)-p'(\lambda)p_{\sigma}(\lambda)=0,
\end{equation*}
so
\begin{equation}\label{Theorem.real.zeroes.p.sigma.proof.7}
\dfrac{p'_{\sigma}(\lambda)}{p_{\sigma}(\lambda)}=\dfrac{p'(\lambda)}{p(\lambda)}.
\end{equation}
At the same time, by~\eqref{Q1.negativity} and \eqref{Theorem.real.zeroes.p.sigma.proof.7}
\begin{equation}\label{Theorem.real.zeroes.p.sigma.proof.8}
\begin{array}{c}
\dfrac{F'[p](\lambda)}{p(\lambda)p_{\sigma}(\lambda)}=\dfrac{p(\lambda)p''_{\sigma}(\lambda)-p''(\lambda)p_{\sigma}(\lambda)}{p(\lambda)p_{\sigma}(\lambda)}=
\dfrac{p''_{\sigma}(\lambda)}{p_{\sigma}(\lambda)}-\dfrac{p''(\lambda)}{p(\lambda)}<
\left(\dfrac{p'_{\sigma}(\lambda)}{p_{\sigma}(\lambda)}\right)^2-\dfrac{p''(\lambda)}{p(\lambda)}=\\
\\
=\left(\dfrac{p'(\lambda)}{p(\lambda)}\right)^2-\dfrac{p''(\lambda)}{p(\lambda)}=\dfrac{F[p](\lambda)}{p^2(\lambda)}=0.
\end{array}
\end{equation}
Thus, $\lambda$ is a simple pole of $-\dfrac{H}{H'}$, and from~\eqref{Theorem.real.zeroes.p.sigma.proof.8} we have
\begin{equation*}
-\dfrac{H(z)}{H'(z)}=\dfrac{B_{\lambda}}{z-\lambda}+O(1)\qquad\text{as}\quad z\to\lambda,
\end{equation*}
with
\begin{equation*}
B_{\lambda}=\lim\limits_{z\to\lambda}\dfrac{p(z)p_{\sigma}(z)(z-\lambda)}{-F[p](z)}=-\dfrac{p(\lambda)p_{\sigma}(\lambda)}{F'[p](\lambda)}>0,
\end{equation*}
so~\eqref{Theorem.real.zeroes.p.sigma.proof.6} holds.

Thus, we obtained that all the real poles of $-\dfrac{H}{H'}$ are simple, and the Cauchy index of $-\dfrac{H}{H'}$ at every
real pole equals $1$. From~\eqref{H/H'.index.1} it follows now that the function $-\dfrac{H}{H'}$ has exactly
\begin{equation}\label{Theorem.real.zeroes.p.sigma.proof.9}
2m-2m_{\sigma}-\sum\limits_{i=1}^{l_2}(r_i-1)
\end{equation}
real poles, counting multiplicities. All these poles are real simple zeroes of the function~$Q[p]$,
according to~\eqref{H.derivartive.1}--\eqref{H.log.derivartive.1}. Additionally, the function $Q[p]$
has real zeroes at the points $s_i$, $i=1,\ldots,l_2$, and the number $s_i$ is a zero of $Q[p]$ of
multiplicity $r_i-1$. In particular, it is a multiple zero of $Q[p]$ if $r_i\geqslant3$. As we mentioned
above, at the points $s_i$, $i=1,\ldots,l_2$, the function $Q[p]$ has exactly
\begin{equation*}
\sum\limits_{i=1}^{l_2}(r_i-1)
\end{equation*}
real zeros, counting multiplicities, which together with~\eqref{Theorem.real.zeroes.p.sigma.proof.9}
gives~\eqref{Number.real.zeroes.Q.ext}.

\vspace{5mm}

Let now $\sigma=0$. Then the formula~\eqref{Number.real.zeroes.Q.ext} can be proved in a similar way.
The only difference is that for $\sigma=0$, $p_0=p'+0\cdot p=p'$, and $\deg p_0=n-1$. In this case,
the index of the logarithmic derivative of~$H$ on the real line is
\begin{equation*}
\IndC\left(\dfrac{H'}{H}\right)=2m-2m_{\sigma}-1-\sum\limits_{i=1}^{l_2}(r_i-1),
\end{equation*}
see~\eqref{Theorem.real.zeroes.p.sigma.proof.2.1}, and
\begin{equation*}
-\dfrac{H(z)}{H'(z)}=z+c_0+\sum\limits_{k=0}^{\infty}\dfrac{c_k}{z^k},
\end{equation*}
so it follows that $\IndI\left(-\dfrac{H}{H'}\right)=-1$, and from Theorem~\ref{Th.Cauchy.index.properties}, \eqref{Cauchy.index.on.projective.line},
and~\eqref{Theorem.real.zeroes.p.sigma.proof.0} we have
\begin{equation*}%\label{H/H'.index.2}
\begin{array}{l}
\IndC\left(-\dfrac{H}{H'}\right)-1=\IndC\left(-\dfrac{H}{H'}\right)+\IndI\left(-\dfrac{H}{H'}\right)=\IndPR\left(-\dfrac{H}{H'}\right)=\\
\\
=\IndPR\left(\dfrac{H'}{H}\right)=\IndC\left(\dfrac{H'}{H}\right)=2m-2m_{\sigma}-1-\sum\limits_{i=1}^{l_2}(r_i-1).
\end{array}
\end{equation*}
Therefore,
\begin{equation*}
\IndC\left(-\dfrac{H}{H'}\right)=2m-2m_{\sigma}-\sum\limits_{i=1}^{l_2}(r_i-1),
\end{equation*}
where $r_i$ and $l_2$ are defined in~\eqref{Theorem.real.zeroes.p.sigma.proof.0.1} and~\eqref{Theorem.real.zeroes.p.sigma.proof.2} with $p_0=p'$.
Consequently, the numbers $s_i$ are zeroes of~$p'$, so if $\lambda\in\mathbb{R}$ is a pole of $-\dfrac{H}{H'}$, then~$p'(\lambda)\neq0$.
In a similar way as above, one can show that every real pole of $-\dfrac{H}{H'}$ is simple, and the index of $-\dfrac{H}{H'}$
at each real pole is $1$. Thus, formula~\eqref{Number.real.zeroes.Q.ext} can be proved in the same way as for the case $\sigma\neq0$.
\end{proof}
\smallskip

As a by-product, from the proof of Theorem~\ref{Theorem.real.zeroes.p.sigma} we easily obtain the following fact.
\begin{theorem}\label{Theorem.real.zeroes.Q.lower.bound}
Let $p$ be a real polynomial with $Z_{\mathbb{C}}(p)=2m$, and let
\begin{equation*}
2m_{\sigma^{*}}=\min\limits_{\sigma\in\mathbb{R}}\left(Z_{\mathbb{C}}(p_\sigma)\right),
\end{equation*}
where the polynomial $p_{\sigma}$ is defined in~\eqref{p.sigma}. Then
\begin{equation}\label{Lower.bound.number.real.zeroes.Q}
Z_{\mathbb{R}}\left(Q[p]\right)\geqslant2m-2m_{\sigma^{*}}.
\end{equation}
\end{theorem}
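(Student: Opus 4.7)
The plan is to revisit the Cauchy-index computation performed in the proof of Theorem~\ref{Theorem.real.zeroes.p.sigma} and to observe that the identity
\begin{equation*}
\IndC\left(-\dfrac{H}{H'}\right) = 2m - 2m_\sigma - \sum_{i=1}^{l_2}(r_i-1),
\end{equation*}
with $H=p_\sigma/p$ and the notation of~\eqref{Theorem.real.zeroes.p.sigma.proof.0.1}--\eqref{Theorem.real.zeroes.p.sigma.proof.2}, was obtained there \emph{before} the hypothesis $Q[p_\sigma](x)<0$ was ever invoked. That hypothesis entered only in Parts~\textsc{I} and~\textsc{II} to force each real pole of $-H/H'$ to be simple with local index $+1$. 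Dropping the hypothesis, the displayed identity remains valid for every real $\sigma$ (including $\sigma=0$, through the $\IndI(-H/H')=-1$ correction carried out at the end of that proof), since its derivation uses only the factorisation of $p_\sigma$, the partial-fraction expansion of $H'/H$, and the additivity properties recorded in Theorem~\ref{Th.Cauchy.index.properties}.

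Next, I would convert this identity into a lower bound for $Z_{\mathbb{R}}(Q[p])$. By Theorem~\ref{Thm.multiple.zero.Q} each $s_i$ is a real zero of $Q[p]$ of multiplicity $r_i-1$, so the points $s_1,\ldots,s_{l_2}$ contribute $\sum_i(r_i-1)$ to $Z_{\mathbb{R}}(Q[p])$. Every other real zero $\lambda$ of $Q[p]$ satisfies $p_\sigma(\lambda)\neq 0$ and is therefore a real pole of $-H/H'$ of order equal to its multiplicity as a zero of $Q[p]$. Consequently the number of \emph{distinct} real poles of $-H/H'$ is bounded above by the total multiplicity of real zeros of $Q[p]$ away from $\{s_1,\ldots,s_{l_2}\}$, giving
\begin{equation*}
Z_{\mathbb{R}}(Q[p]) \geqslant \sum_{i=1}^{l_2}(r_i-1) + \#\{\text{distinct real poles of }-H/H'\}.
\end{equation*}

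I would close with the elementary bound $\#\{\text{distinct real poles of }R\}\geqslant\IndC(R)$, immediate from Definitions~\ref{Def.Cauchy.index.at.pole}--\ref{Def.Cauchy.index.on.interval}: an even-order real pole contributes $0$ and an odd-order real pole contributes $\pm 1$, so each distinct real pole adds at most $1$ to $\IndC(R)$. Combining everything,
\begin{equation*}
Z_{\mathbb{R}}(Q[p]) \geqslant \sum_{i=1}^{l_2}(r_i-1) + \IndC\left(-\dfrac{H}{H'}\right) = 2m - 2m_\sigma
\end{equation*}
for \emph{every} $\sigma\in\mathbb{R}$, and specialising to $\sigma=\sigma^*$ yields~\eqref{Lower.bound.number.real.zeroes.Q}. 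The only real obstacle is bookkeeping: one must verify that each step in the proof of Theorem~\ref{Theorem.real.zeroes.p.sigma} leading up to the index identity is truly independent of the sign hypothesis on $Q[p_\sigma]$, which amounts to rereading rather than introducing new ideas.
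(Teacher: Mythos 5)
Your argument is correct and follows essentially the same route as the paper: both rest on observing that the identity $\IndC\left(-\tfrac{H}{H'}\right)=2m-2m_{\sigma}-\sum_{i}(r_i-1)$ from the proof of Theorem~\ref{Theorem.real.zeroes.p.sigma} never used the negativity hypothesis, bounding the index by the number of distinct odd-order real poles (which are real zeroes of $Q[p]$), adding back the $\sum_i(r_i-1)$ zeroes at the points $s_i$ via Theorem~\ref{Thm.multiple.zero.Q}, and then optimising over $\sigma$. No gaps.
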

\begin{proof}
Indeed, from the proof of Theorem~\ref{Theorem.real.zeroes.p.sigma} it follows that for
arbitrary $\sigma\in\mathbb{R}$
\begin{equation}\label{Theorem.real.zeroes.Q.lowert.bound.proof.1}
\IndC\left(-\dfrac{H}{H'}\right)=2m-2m_{\sigma}-\sum\limits_{i=1}^{l_2}(r_i-1),
\end{equation}
where $r_i$ are multiplicities of multiple zeroes $s_i$ of $p_{\sigma}$ which are not
zeroes of $p$. The poles of $-\dfrac{H}{H'}$ are zeroes of $Q[p]$. Moreover, according to
Definition~\ref{Def.Cauchy.index.at.pole}, the zeroes of $Q[p]$ of even multiplicity do
not contribute to the index~\eqref{Theorem.real.zeroes.Q.lowert.bound.proof.1}. Thus,
the index~\eqref{Theorem.real.zeroes.Q.lowert.bound.proof.1} is equal to the sum of
\textit{distinct} zeroes of $Q[p]$ of odd multiplicities with indices
of various signs (if any) at them. Consequently, we have
\begin{equation}\label{Theorem.real.zeroes.Q.lowert.bound.proof.2}
Z_{\mathbb{R}}\left(Q[p]\right)\geqslant2m-Z_{\mathbb{C}}(p_\sigma)-\sum\limits_{i=1}^{l_2}(r_i-1).
\end{equation}
Now recall that each zero $s_i$ of $p_{\sigma}$ of multiplicity $r_i$ is a zero of $Q[p]$
of multiplicity $r_i-1$, $i=1,\ldots,l_2$. Therefore, additionally to the zeroes guaranteed
by the index~\eqref{Theorem.real.zeroes.Q.lowert.bound.proof.1}, the function $Q[p]$ has
exactly
\begin{equation*}
\sum\limits_{i=1}^{l_2}(r_i-1)
\end{equation*}
zeroes, counting multiplicities, at the points $s_i$, $i=1,\ldots,l_2$. Now
from~\eqref{Theorem.real.zeroes.Q.lowert.bound.proof.2} it follows that
\begin{equation*}\label{Theorem.real.zeroes.Q.lowert.bound.proof.3}
Z_{\mathbb{R}}\left(Q[p]\right)\geqslant2m-Z_{\mathbb{C}}(p_\sigma)
\end{equation*}
for any $\sigma\in\mathbb{R}$. Since $2m-Z_{\mathbb{C}}(p_\sigma)\leqslant2m-2m_{\sigma^{*}}$, the lower bound~\eqref{Lower.bound.number.real.zeroes.Q} is true.
\end{proof}

It is interesting to find out if it is possible to establish Theorem~\ref{Theorem.Hawaii.base.ineq} using the Cauchy indices. However, we believe that a stronger fact is true.
\begin{conjecture}
Let $p$ be a real polynomial with $Z_{\mathbb{C}}(p)=2m>0$. Then there exists $\sigma\in\mathbb{R}$ such that
\begin{equation*}\label{Number.real.zeroes.Q.eq.Q1}
Z_{\mathbb{R}}\left(Q[p]\right)=2m-2m_{\sigma}+Z_{\mathbb{R}}\left(Q[p_{\sigma}]\right),
\end{equation*}
where $2m_{\sigma}=Z_{\mathbb{C}}\left(p_{\sigma}\right)\leqslant2m$, and the polynomial $p_{\sigma}$ is defined in~\eqref{p.sigma}. Moreover, if there exists $x,\sigma\in\mathbb{R}$ with
$Q[p_{\sigma}](x)>0$, then $\sigma$ can be chosen such that $2m_{\sigma}<2m$.

%Moreover, the real zeroes of $Q[p]$ different from zeroes of $p_{\sigma}$ and $Q[p_{\sigma}]$ of multiplicity greater than $2$ (if any) are simple.
\end{conjecture}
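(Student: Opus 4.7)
The plan is to extend the Cauchy index analysis of Theorem~\ref{Theorem.real.zeroes.p.sigma} from the special case $Q[p_{\sigma}]<0$ to arbitrary $\sigma\in\mathbb{R}$. With $H=p_{\sigma}/p$, the identity
\begin{equation*}
(\log H)''=\dfrac{H''}{H}-\left(\dfrac{H'}{H}\right)^{2}=Q[p_{\sigma}]-Q[p]
\end{equation*}
holds unconditionally, so at any simple real zero $\lambda$ of $Q[p]=H'$ with $p(\lambda)p_{\sigma}(\lambda)\neq 0$ and $Q[p_{\sigma}](\lambda)\neq 0$ one has $H''(\lambda)=H(\lambda)\,Q[p_{\sigma}](\lambda)\neq 0$. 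Hence $\lambda$ is a simple pole of $-H/H'$ with residue $-H(\lambda)/H''(\lambda)=-1/Q[p_{\sigma}](\lambda)$, and therefore
\begin{equation*}
\Ind_{\lambda}\!\left(-\dfrac{H}{H'}\right)=-\sgn Q[p_{\sigma}](\lambda).
\end{equation*}
This pointwise formula is the quantitative refinement of Cases~I and~II in the proof of Theorem~\ref{Theorem.real.zeroes.p.sigma} needed for general~$\sigma$: one learns not merely that every real pole has index $+1$ under $Q[p_{\sigma}]<0$, but the exact sign of the index for any~$\sigma$.

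Taking $\sigma$ generic, so that (by Corollary~\ref{Corol.finite.number.multiple roots} together with standard discriminant arguments) $p_{\sigma}$, $F[p]$ and $F[p_{\sigma}]$ all have only simple zeros, every real zero of $Q[p]$ becomes a simple real pole of $-H/H'$ and the correction $\sum(r_i-1)$ appearing in the proof of Theorem~\ref{Theorem.real.zeroes.p.sigma} vanishes. Writing $n_{\pm}(\sigma)$ for the number of real zeros of $Q[p]$ at which $Q[p_{\sigma}]$ is strictly negative, respectively positive, the Cauchy index formula of that theorem rewrites as
\begin{equation*}
n_{+}(\sigma)+n_{-}(\sigma)=Z_{\mathbb{R}}(Q[p]),\qquad n_{+}(\sigma)-n_{-}(\sigma)=\IndC\!\left(-\dfrac{H}{H'}\right)=2m-2m_{\sigma},
\end{equation*}
so $Z_{\mathbb{R}}(Q[p])=2m-2m_{\sigma}+2n_{-}(\sigma)$. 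Since $Q[p_{\sigma}](x)\to 0^{-}$ as $|x|\to\infty$ and $Q[p_{\sigma}](x)\to-\infty$ at every real zero of $p_{\sigma}$, the set $\{x\in\mathbb{R}:Q[p_{\sigma}](x)>0\}$ is a finite disjoint union of bounded open intervals whose endpoints are simple real zeros of $Q[p_{\sigma}]$, and $Z_{\mathbb{R}}(Q[p_{\sigma}])$ equals twice the number of such intervals. The conjectured equality therefore reduces to the \emph{matching lemma}: for a suitable generic~$\sigma$, each bounded positivity interval of $Q[p_{\sigma}]$ contains exactly one real zero of $Q[p]$.

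The matching lemma is the principal obstacle. My main strategy would be a continuity-in-$\sigma$ argument: the real zeros of $Q[p]$ depend on $\sigma$ through the algebraic relations of Theorem~\ref{Thm.multiple.zero.Q}, while the endpoints of the positivity intervals of $Q[p_{\sigma}]$ move continuously with $\sigma$ and are created or destroyed only at the discrete critical values in Corollary~\ref{Corol.finite.number.multiple roots}; one then deforms~$\sigma$ until the inequality $n_{-}(\sigma)\leqslant\tfrac12 Z_{\mathbb{R}}(Q[p_{\sigma}])$ becomes an equality, using that both sides jump only at those critical values. A complementary attack is to compute a second Cauchy index, e.g.\ $\IndC(Q[p]/Q[p_{\sigma}])$, via property~3) of Theorem~\ref{Th.Cauchy.index.properties}, and to identify it directly with $2n_{-}(\sigma)-Z_{\mathbb{R}}(Q[p_{\sigma}])$. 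The ``moreover'' clause I would handle by the same deformation: under its hypothesis the positivity region of $Q[p_{\sigma}]$ is non-empty on an open set of $\sigma$, while $m_{\sigma}$ equals $m$ at $\sigma=\pm\infty$ (where the zeros of $p_{\sigma}$ cluster near those of $p$) and can drop strictly only at the critical values of Corollary~\ref{Corol.finite.number.multiple roots}; arranging the deformation above to cross such a critical value yields a $\sigma$ with $2m_{\sigma}<2m$ at which, assuming the matching lemma, the desired identity still holds.
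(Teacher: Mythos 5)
The statement you are addressing is presented in the paper as a \emph{conjecture}: the authors give no proof of it, so there is nothing of theirs to compare your argument against, and the question is simply whether your proposal settles it. It does not, and you say as much yourself: everything reduces to the ``matching lemma'' (each bounded positivity interval of $Q[p_{\sigma}]$ contains exactly one real zero of $Q[p]$), for which you offer only two unexecuted strategies. That said, the part you do carry out is correct and is a genuine sharpening of the paper's Theorem~\ref{Theorem.real.zeroes.p.sigma}: from $(\log H)''=Q[p_{\sigma}]-Q[p]$ you get $H''(\lambda)=H(\lambda)\,Q[p_{\sigma}](\lambda)$ at a simple zero $\lambda$ of $H'=Q[p]$, hence $\Ind_{\lambda}\left(-H/H'\right)=-\sgn Q[p_{\sigma}](\lambda)$, which recovers Cases~I and~II of that proof as the special case $Q[p_{\sigma}]<0$ and, combined with the identity \eqref{Theorem.real.zeroes.Q.lowert.bound.proof.1} valid for all $\sigma$, correctly isolates what remains to be proved, namely $2n_{-}(\sigma)=Z_{\mathbb{R}}(Q[p_{\sigma}])$. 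This is a reasonable research plan, not a proof.

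Beyond the missing matching lemma there are two concrete slips in the reduction itself. First, you take ``$\sigma$ generic'' so that $F[p]$ has only simple zeros, but $F[p]=pp''-(p')^2$ does not depend on $\sigma$ at all: if $Q[p]$ has a real zero of even multiplicity, it contributes $0$ to $\IndC\left(-H/H'\right)$ for \emph{every} $\sigma$, and your bookkeeping $n_{+}+n_{-}=Z_{\mathbb{R}}(Q[p])$, $n_{+}-n_{-}=\IndC\left(-H/H'\right)$ fails; such zeros must be treated separately (Theorems~\ref{Theorem.multiple.zero.Q.and.Q1} and~\ref{Thm.multiple.zero.Q} are the right tools, but you do not deploy them here). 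Second, the claim that $Z_{\mathbb{R}}(Q[p_{\sigma}])$ equals twice the number of positivity intervals fails whenever $Q[p_{\sigma}]$ has a real zero of even order at which it does not change sign; whether this happens is governed, via Theorem~\ref{Thm.multiple.zero.Q} applied to $p_{\sigma}$, by third-order degenerations of the two-parameter family $(p_{\sigma})_{\tau}$, so it cannot be waved away by genericity in $\sigma$ alone without an argument. Until the matching lemma is proved and these two degeneracies are handled, the conjecture remains open.
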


%However, we are not sure that in this conjecture $\sigma$ can always be chosen such that $2m_{\sigma}=\min\limits_{\sigma\in\mathbb{R}}\left(Z_{\mathbb{C}}(p_\sigma)\right)$.
%Seems to be not.

%\begin{remark}
%Note that the upper bound in the Theorem~\ref{Theorem.Hawaii.base.ineq} cannot be improved to
%identity, in general. Indeed, calculations show that for the polynomial
%
%\begin{equation*}
%p(z)=(z-3)(z+5)(z+1)(z-15)^3(z-10)(z+7+i)(z+7-i)(z+13+i)(z+13-i)(z+10+i)^5(z+10-i)^5
%\end{equation*}
%
%\end{remark}

\setcounter{equation}{0}
%%%%%%%%%%%%%%%%%%%%%%%%%%%%%%%%%%%%%%%%%%%%%%%%%%%%%%%%%%%%%%%%%%%%%%%%%%%%%
\section*{Acknowledgments}
%%%%%%%%%%%%%%%%%%%%%%%%%%%%%%%%%%%%%%%%%%%%%%%%%%%%%%%%%%%%%%%%%%%%%%%%%%%%%

%The work of M.\,Tyaglov was supported with the support of the Russian Science Foundation Grant 19-71-30002.

The work of M.\,Tyaglov was financially supported by a grant of the Ministry
of Education and Science of the Russian Federation for the establishment and
development of the Leonhard Euler International Mathematical Institute in Saint
Petersburg, agreements 075-15-2025-343.

\end{document}